\documentclass[11pt]{article}

\usepackage{amsmath,amssymb,amsthm,mathtools}
\usepackage[margin=1in]{geometry}

\newtheorem{theorem}{Theorem}[section]
\newtheorem{lemma}[theorem]{Lemma}
\newtheorem{corollary}[theorem]{Corollary}

\newcommand{\N}{\mathbb N}
\newcommand{\card}[1]{\lvert #1\rvert}

\title{Dense Sets with Prescribed Difference Representation Functions}
\author{}
\date{}

\begin{document}

	\author{Wu-Xia Ma$^a$\footnote{ Corresponding author.} and Bing-Ling Wu$^{b,}$\\
		\small School of Science, Nanjing University of Posts and Telecommunications,\\
		\small Nanjing 210023, PR China\\
		\small E-mails: mawuxia@126.com$^a$, wubingling911219@163.com$^b$}
	\maketitle
	
	\begin{abstract}
		For a set $A$ of positive integers, let $d_A(n)$ denote the number of
		representations of $n$ as a difference of two elements of $A$.  We
		prove that, if $\liminf_{n\to\infty}f(n)\geq2$, then for every Sidon
		set $B$ and every positive function $\omega(x)$ tending to infinity
		there is a set $A\subseteq\N$ such that $d_A(n)=f(n)$ for all
		$n\in\N$ and $|A(x)-B(x/2)|\leq\omega(x)$ for all $x\geq1$.  We also
		give two extensions that allow $f(n)=1$ for infinitely many $n$.  For
		an infinite Sidon set $B$, the same density scale $B(x/2)$ is retained
		when $f$ is at least $2$ on arbitrarily long intervals centered at
		elements of $2B$.  If $\{n:f(n)\geq2\}$ contains arbitrarily long
		intervals, the density scale $B(x/3)$ is obtained for every Sidon set
		$B$.  The proofs use remote insertion of new representations together
		with controlled deletions from a Sidon core.  As a quantitative
		consequence, Ruzsa's dense infinite Sidon sequence gives
		$A(x)=x^{\sqrt2-1+o(1)}$ whenever $\{n:f(n)\geq2\}$ contains
		arbitrarily long intervals.
	\end{abstract}
	
	\medskip
	\noindent\textbf{Keywords:} difference representation function; Sidon set;
	perfect difference set; dense set.
	
	\smallskip
	\noindent\textbf{2020 Mathematics Subject Classification:}
	11B13, 11B34.
	
	\section{Introduction and statements of the results}
	
	Throughout the paper, $\N=\{1,2,3,\ldots\}$.  If $A$ is a set of
	integers and $n$ is an integer, put
	\[
	d_A(n)=\card{\{(a,a')\in A^2:a-a'=n\}}.
	\]
	Thus $d_A(n)$ is the number of ordered representations of $n$ as a
	difference of two elements of $A$.  For $A\subseteq\N$ and $x\geq1$,
	write $A(x)=\card{A\cap[1,x]}$.  A set $A\subseteq\N$ is called a
	\emph{Sidon set} if $d_A(n)\leq1$ for every $n\in\N$, and a
	\emph{perfect difference set} if $d_A(n)=1$ for every $n\in\N$.
	
	The first quantitative constructions of perfect difference sets give
	sets $A$ with $A(x)\gg x^{1/3}$; see Pollington and Vanden
	Eynden~\cite{PollingtonVandenEynden}.  Pollington~\cite{Pollington}
	later obtained the bound $A(x)\gg(x\log x)^{1/3}$.  These estimates led
	Lev to ask at CANT 2004 whether a perfect difference set could satisfy
	\[
	A(x)\gg x^\delta\qquad\text{for some }\delta>\frac13;
	\]
	see also~\cite{Lev}.
	
	Cilleruelo and Nathanson~\cite{CillerueloNathanson2008} developed a
	construction of perfect difference sets from Sidon sets and proved the
	following result.
	
	\medskip
	\noindent
	\textbf{Theorem A}
	{\rm(\cite[Theorem 1.1]{CillerueloNathanson2008}).}
	For every Sidon set $B\subseteq\N$ and every function
	$\omega(x)\to\infty$, there exists a perfect difference set
	$A\subseteq\N$ such that
	\[
	A(x)\geq B(x/3)-\omega(x)
	\]
	for all sufficiently large $x$.
	\medskip
	
	Together with Ruzsa's construction of an infinite Sidon
	sequence~\cite{Ruzsa1998}, Theorem~A yields a perfect difference set
	$A$ for which
	\[
	A(x)\gg x^{\sqrt2-1-o(1)}.
	\]
	Cilleruelo and Nathanson also constructed a perfect difference set
	satisfying
	\[
	\limsup_{x\to\infty}\frac{A(x)}{\sqrt{x}}\geq\frac1{\sqrt2}.
	\]
	
	Representation functions and their inverse problems have been studied
	in several related settings; see
	\cite{AjtaiKomlosSzemeredi,CillerueloNathanson2013,Kruckeberg,
		Nathanson2003,Nathanson2005,Ruzsa1993,Stohr}.  Fang~\cite{Fang2020}
	considered difference and sum representation functions simultaneously.
	More precisely, if $f_1:\N\to\N$ and $f_2:\mathbb Z\to\N$ satisfy
	$\liminf_{n\to\infty}f_1(n)\geq2$ and
	$\liminf_{|n|\to\infty}f_2(n)\geq2$, he
	constructed a set $A\subseteq\mathbb Z$ for which $d_A=f_1$ on
	$\N$, the unordered two-term sum representation function of $A$ is
	$f_2$ on $\mathbb Z$, and
	$\limsup_{x\to\infty}A(-x,x)/\sqrt{x}\geq1/\sqrt2$, where
	$A(-x,x)=\card{A\cap[-x,x]}$.  In the setting
	of positive integers and density measured relative to a prescribed
	Sidon set, Fang~\cite{Fang2021} obtained the following theorem.
	
	\medskip
	\noindent
	\textbf{Theorem B} {\rm(\cite[Theorem 1]{Fang2021}).}
	Let $f:\N\to\N$ be increasing and satisfy $f(n)\geq2$ for every
	$n\in\N$.  For every Sidon set $B\subseteq\N$ and every function
	$\omega(x)\to\infty$, there exists a set $A\subseteq\N$ such that
	\[
	d_A(n)=f(n)\qquad(n\in\N)
	\]
	and
	\[
	B(x/3)-\omega(x)\leq A(x)\leq B(x/3)+\omega(x)
	\]
	for all sufficiently large $x$.
	\medskip
	
	More recently, Chen and Fang~\cite{ChenFang2027} proved the following
	density theorem for perfect difference sets.
	
	\medskip
	\noindent
	\textbf{Theorem C} {\rm(\cite[Theorem 1.1]{ChenFang2027}).}
	For every Sidon set $B\subseteq\N$ and every positive function
	$\omega(x)\to\infty$, there exists a perfect difference set
	$A\subseteq\N$ such that
	\[
	B(x/2)-\omega(x)\leq A(x)\leq B(x/2)+\omega(x)
	\qquad(x\geq1).
	\]
	\medskip
	
	The following is the basic form of our result.
	
	\begin{theorem}\label{thm:main}
		Let $f:\N\to\N$ satisfy
		$\liminf_{n\to\infty}f(n)\geq2$.  For every Sidon set
		$B\subseteq\N$ and every positive function $\omega(x)\to\infty$,
		there exists a set $A\subseteq\N$ such that
		\[
		d_A(n)=f(n)\qquad(n\in\N)
		\]
		and
		\[
		B(x/2)-\omega(x)\leq A(x)\leq B(x/2)+\omega(x)
		\qquad(x\geq1).
		\]
	\end{theorem}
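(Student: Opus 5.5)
We will follow the Chen--Fang strategy behind Theorem~C, adapted to prescribed multiplicities. The set $A$ will be $A=B'\cup\bigcup_{k\ge1}S_k$, where $B'=2B\setminus D$ is a Sidon core and $D$ is a sparse set of deleted elements. Doubling $B$ explains the density scale: $(2B)(x)=B(x/2)$. Each $S_k$ is a small block of new elements placed very far out. We add new representations one difference at a time. The target is $d_A(n)=f(n)$ for all $n$, with $|D(x)|$ and $\sum_k|S_k\cap[1,x]|$ both at most $\omega(x)$.

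\textbf{Initial normalization.} Since $\liminf f\ge2$, fix $n_0$ with $f(n)\ge2$ for $n\ge n_0$. Differences of $2B$ are even and have multiplicity at most $1$. For the finitely many $n<n_0$, and for $n$ where $f(n)=1$ causes trouble, we will delete from $2B$ an element of each representation that must be destroyed. This is done sparsely: only elements beyond a threshold $x_k$ chosen so that $\omega(x_k)$ exceeds the number of deletions made so far. For large $n$ we never need to delete, because $f(n)\ge 2>d_{2B}(n)$.

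\textbf{Inductive step.} Enumerate $\N$ and treat each $n$ in turn, keeping a finite current set $A_k=(B'\cap[1,M_k])\cup S_1\cup\dots\cup S_k$ together with the invariant $d_{A_k}(m)\le f(m)$ for all $m$ and equality for every $m$ already treated. To raise $d(n)$ by one, insert a pair $\{u,u+n\}$ with $u$ odd and huge, say $u\approx$ a large multiple of $M_k$. The new differences are then $n$, $u-a$, $u+n-a$ and their negatives, for $a$ in the current set. All of these are enormous and pairwise distinct. They will coincide with no difference of the current set, provided $u$ is chosen generic modulo a suitable large modulus, or far enough away that the differences exceed $\max(A_k)$ and avoid each other.

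Future elements of $2B$ beyond $M_k$, however, create differences $b-u$ that may hit prescribed values. This is the delicate interaction, and it is the \textbf{main obstacle}: controlling collisions between the remote insertions and the infinite core $2B$. We plan to handle it as follows.
\begin{enumerate}
\item Each such collision occurs at a huge difference $m$ where $f(m)\ge2$.
\item A single stray representation is therefore harmless until $m$ is treated. We only need $d(m)\le f(m)$ to persist, so we must show that at most one core difference plus at most one ``insertion--core'' difference lands on each $m$.
\item The parity trick enforces this. Core elements are even and inserted elements are odd, so core--core differences are even and insertion--core differences are odd. An odd $m$ is then hit by insertion--core pairs only.
\item Choosing the $u$'s super-exponentially growing guarantees that each $m$ arises from at most one inserted element $u$. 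The Sidon property of $B$ then gives at most one $b$ with $b-u=m$, and similarly for $u+n$.
\item Where a value would still exceed $f(m)$ (only if $f(m)=2$ and three hits occur), we delete the offending core element. Such deletions are rare, and we place them above the current $\omega$-threshold.
\end{enumerate}

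\textbf{Density and the lower range.} The number of inserted elements up to $x$ is at most the number of completed steps $k$ with $u_k\le x$. Choosing $u_k$ so large that $\omega(u_k)>4k$ gives $\sum|S_j\cap[1,x]|\le\omega(x)/2$. The deletions are bounded in the same way, which yields $|A(x)-B(x/2)|\le\omega(x)$ for large $x$. For small $x$, where $B(x/2)$ and $\omega(x)$ are tiny, we start the construction with no element below the first point where $\omega$ exceeds the relevant bound; this is possible because $\omega>0$ and the first insertions can be taken beyond any given point. Since every $n$ is eventually treated and increments stop exactly at $f(n)$, we obtain $d_A=f$.
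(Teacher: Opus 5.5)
Your outline (even core $2B$, odd remote insertions, parity separation, deletions from the core above $\omega$-thresholds) is the paper's; Theorem 1.1 is deduced from the $2B$ construction of Theorem 1.2. However, the step that makes it work is missing, and several claims you rely on are false.

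\emph{Odd $n$.} For odd $n$ the pair $\{u,u+n\}$ contains an even element, so item 3 collapses: $u+n$ forms even differences with $2B$ that can coincide with core differences. The paper instead inserts the single odd element $e+n$ next to a surviving core element $e$ with $e+2n\notin 2B$, so exactly one new representation of $n$ arises, and then protects $e$.

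\emph{Item 4.} This is false: no growth condition on the $u$'s stops two insertions from hitting the same odd difference. Indeed $b_1-u_1=b_2-u_2$ iff $b_2-b_1=u_2-u_1$. If $B$ is a perfect difference set (which is a Sidon set), this is solvable in $2B$ for every pair of odd $u_1<u_2$. The sum relation $b_1+b_2=u_1+u_2$ gives further collisions.

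\emph{Item 1.} This also fails as stated, since a core element close to $u$ yields a small difference, where $f$ may equal $1$.

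Hence the restriction in item 5 to ``$f(m)=2$ and three hits'' has no basis. You have no bound on the number of deletions per step, and that is exactly what the density estimate needs.

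What is needed is the content of Lemma 2.1, in two parts.

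(a) Count collisions instead of avoiding them. For each pair $(x,y)$ with $x$ a new odd element and $y$ any odd element, the Sidon property allows at most one nontrivial solution of $e_1-e_2=x-y$ and at most two ordered solutions of $e_1+e_2=x+y$. Add the at most $2|O|+1$ new odd--odd representations and apply a charging argument. This bounds the total excess created at a step by $10(|O|+2)$, hence $O(h^2)$ deletions after $h$ steps. Thresholds $L_h$ with $\omega(x)>10h(h+1)$ for \emph{all} $x>L_h$ then handle non-monotone $\omega$; your condition $\omega(u_k)>4k$ at a single point does not.

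(b) Show that deletions never damage completed counts. Protect the endpoints of all representations of treated differences and of the current target, and all elements below the threshold. Then prove that every overfull difference still has a representation with an unprotected core endpoint. The paper does this by placing the new block beyond $4M$, with $M$ exceeding every protected and inserted element. Inside (protected set)$\,\cup\,$(block):
\begin{enumerate}
\item differences $\le M$ are old ones plus the single new representation of the deficient target;
\item no difference lies in $(M,3M]$;
\item each difference $>3M$ has at most two representations and lies where $f\ge2$.
\end{enumerate}
Without (b), deleting ``the offending core element'' may destroy a representation you have already counted. Your finite truncations $A_k$ also leave unresolved when the excess caused by core elements beyond $M_k$ is removed. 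The paper avoids this by maintaining $d_{D_h}\le f$ for the whole infinite set at every stage.
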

	
	No monotonicity is assumed for either $f$ or $\omega$.  The hypothesis
	is equivalent to $f(n)\geq2$ for all sufficiently large $n$, so
	finitely many prescribed values may equal $1$.
	
	An arbitrary positive-valued function need not be a difference
	representation function.  For example, define $g:\N\to\N$ by
	$g(1)=3$ and $g(n)=1$ for $n\geq2$.  If $d_A=g$, then the equality
	$d_A(1)=3$ gives three distinct integers $x_1<x_2<x_3$ such that
	$x_i,x_i+1\in A$ for $i=1,2,3$.  With $v=x_3-x_1\geq2$, we obtain
	\[
	v=x_3-x_1=(x_3+1)-(x_1+1),
	\]
	and hence $d_A(v)\geq2$, contrary to $g(v)=1$.  More generally, if
	$a,a+u,b,b+u\in A$ with $a<b$, then $b-a$ has the two representations
	$(b,a)$ and $(b+u,a+u)$.  Thus repeated representations of one
	difference impose restrictions on other representation numbers.
	
	The condition in Theorem~\ref{thm:main} can nevertheless be relaxed.
	For $f:\N\to\N$, put $S_f=\{n\in\N:f(n)\geq2\}$.  Long intervals of
	this type also occur in the work of Chen and Fang~\cite{ChenFang2011}
	on simultaneous difference and sum representation functions.  The
	first extension retains the density scale $B(x/2)$ by requiring such
	intervals near the elements of the core $2B$.
	
	\begin{theorem}\label{thm:half}
		Let $B\subseteq\N$ be an infinite Sidon set and let $f:\N\to\N$.
		Suppose that, for every $L\in\N$, there are infinitely many $b\in B$
		such that
		\[
		[2b-L,2b+L]\cap\N\subseteq S_f.
		\tag{1.1}\label{eq:compatibility}
		\]
		Let $\omega:[1,\infty)\to(0,\infty)$ satisfy
		$\omega(x)\to\infty$.  Then there exists $A\subseteq\N$ such that
		$d_A(n)=f(n)$ for every $n\in\N$ and
		\[
		B(x/2)-\omega(x)\leq A(x)\leq B(x/2)+\omega(x)
		\qquad(x\geq1).
		\]
	\end{theorem}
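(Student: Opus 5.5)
We will build $A$ as an increasing union $A=\bigcup_k A_k$ of finite sets, modelled on the Chen--Fang construction behind Theorem~C. The target is a set whose ``bulk'' is the Sidon core $2B=\{2b:b\in B\}$, slightly thinned, together with a sparse stream of remote auxiliary elements. The core gives $A(x)\approx B(x/2)$; the remote elements supply the missing representations. Concretely, we fix an enumeration of the $n$ with $d_{A_k}(n)<f(n)$ and process them one at a time, always choosing the least deficient $n$. For such an $n$ we add a pair $\{y,y+n\}$, where $y$ is huge compared with $\max A_k$ and with all previously handled $n$. Since $y$ is remote, the new differences $y-a$, $y+n-a$ $(a\in A_k)$ are all larger than anything yet controlled, and among themselves they create only one representation of $n$. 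So the count is raised by exactly one and no already settled value is disturbed.

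Each remote pair also creates large differences $y-a$ and $y+n-a$, where $a$ runs over $A_k$. These must not exceed $f$, so we keep $d_{A}(m)\le f(m)$ at every stage for every $m$. The core elements near $y$ are where the danger lies, so the remote points must be placed consistently with the core that has not yet been laid down.

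The core $2B$ is only a Sidon set, so it contributes at most one representation to each difference; this is why $f$ must be at least~$1$ everywhere, which is automatic. The difficulty is the values $f(m)=1$. A remote pair $y,y+n$, together with core elements $2b$, produces the differences $y-2b$ and $y+n-2b$. If such a difference also equals some core difference $2b'-2b''$, or is hit twice, then it acquires two representations. This must be forbidden whenever $f$ equals~$1$ there.

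Hypothesis~\eqref{eq:compatibility} resolves this. We choose the remote points \emph{inside} the core, in the form $y=2b$ with $b\in B$ such that $[2b-L,2b+L]\subseteq S_f$, where $L$ is larger than every quantity involving the current configuration. We then arrange every new coincidence to fall on differences in the window $[2b-L,2b+L]$. There $f\ge2$, so a second representation is allowed and even wanted.

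To keep the exact count $d_A(n)=f(n)$, we then \emph{delete} finitely many core elements. These are the elements $2b''$ whose differences with the new point would overshoot a target value, or would collide on a difference where $f=1$. The Sidon property ensures that each coincidence involves $O(1)$ core elements, so only boundedly many deletions occur per step.

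For the density, note that each step adds $2$ points and deletes $O(1)$ points, all at height comparable to $y$. Since $y$ can be pushed as far out as we like, we choose it so that the number of steps below $x$ is at most $\omega(x)/C$. Then $|A(x)-B(x/2)|\le\omega(x)$ holds for all $x$, with the small initial $x$ handled by taking the first $y$ beyond where $\omega$ is small. We must also schedule the deficient $n$ fairly, so that each one is eventually completed; each $n$ needs only finitely many insertions, namely $f(n)-d(n)$.

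The main obstacle is the bookkeeping in the collision analysis. We must show that the differences between the new point $2b$ (and its partner $2b+n$) and the \emph{whole} of $A$ create no unintended representation. This includes core elements added at later stages, which the construction has not yet fixed. The first thing to try is to treat the core as fixed from the start, as $2B$ minus a deleted set $D$. We would then verify that every difference of the form (remote $-$ core) which coincides with another representation falls either in an $f\ge2$ window, or is killed by a single deletion. If that fails, the fallback is to place the partner at the remote height $2b+n$ and to exploit that the differences $2b+n-2b'$ with $b'\in B$ are all distinct by the Sidon property.
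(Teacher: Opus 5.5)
There is a genuine gap, and it sits exactly at the step you leave open as the ``main obstacle'': nothing in your scheme controls coincidences between the new points and the infinite core.

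For odd $n$ your pair $\{2b,2b+n\}$ is the paper's choice. For even $n$, however, the new point $2b+n$ is even. Its differences with core elements $2b'$ are then even and can equal core differences $2b''-2b'''$, which happens whenever $b+n/2+b'''=b''+b'$. For each $b'''$ the Sidon property allows at most one such pair $\{b'',b'\}$. But nothing bounds the number of $b'''\in B$ with $b+n/2+b'''\in B+B$, and these coincidences occur at differences spread over all of $\N$, where $f$ may equal $1$.

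So you cannot ``arrange every new coincidence to fall in the window'': only cross-differences with $2b'$ in two intervals of length $2L$ land there. Your claim of boundedly many deletions per step is therefore unsupported. Your fallback, the distinctness of the numbers $2b+n-2b'$, does not address collisions with core--core differences either.

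The missing idea is parity separation. The paper inserts only odd integers into the even core $E=2B$: $X=\{e+u\}$ for odd $u$, and $X=\{e+1,e+1+u\}$ for even $u$. Then every odd difference comes from a pair (inserted odd point, core point). Two such representations of the same difference satisfy $e_1-e_2=x-y$ or $e_1+e_2=x+y$, and the Sidon property limits this to at most three solutions per pair $(x,y)$. New representations of even differences must lie inside $O\cup X$. Charging each unit of excess to such a collision bounds the total excess by $10(|O|+2)$. This is not $O(1)$, since it grows with the number of odd points already inserted, but the choice of the thresholds $L_h$ absorbs that growth.

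The second missing piece is why the deletions can always be carried out, and terminate, without destroying settled representations. This, rather than placing coincidences in the window, is the real role of the window hypothesis. The paper works as follows:
\begin{itemize}
\item It keeps the whole core $E\setminus C$ from the start, not finite sets $A_k$.
\item It protects a finite set $P$ containing everything below $L$ and all endpoints of representations of differences up to $u$.
\item It takes $M$ larger than $P$, $O$, $C$, $L$ and $u$, and picks $e\in E$ with $e>4M$, $[e-2M,e+2M]\subseteq S_f$ and $e+2u\notin E$.
\end{itemize}
Suppose an overfull difference had no representation containing an unprotected core element. Then all its representations would lie in $S\cup H$, with $S$ below $M$ and $H$ above $4M$. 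Checking the ranges $n\le M$, $M<n\le 3M$ and $n>3M$ shows that $S\cup H$ has no overfull difference. The reasons are that the only new representation inside it is $H$ itself, at the deficient difference $u$, and that its cross-differences lie in $[e-2M,e+2M]$ with at most two representations each. Hence a deletable core element always exists, each deletion lowers the finite total excess, and the limit set satisfies $d_A\le f$ by the finite-endpoint argument.

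Note also that your remark that the new differences $y-a$, $y+n-a$ are ``all larger than anything yet controlled'' is false once the core is present. Core elements near $y$ create small differences. The paper handles these precisely because the offending core partner lies above $M$ and is therefore deletable.
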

	
	If $\liminf_{n\to\infty}f(n)\geq2$, then, for every $L\in\N$, all
	sufficiently large $b\in B$ satisfy \eqref{eq:compatibility}.
	Consequently, Theorem~\ref{thm:main} follows from
	Theorem~\ref{thm:half} when $B$ is infinite; the finite case follows
	from Lemma~\ref{lem:finite-core} in Section~5.
	
	The next extension imposes a condition on $f$ alone and uses the Sidon
	core $3B$.
	
	\begin{theorem}\label{thm:third}
		Let $f:\N\to\N$ and suppose that $S_f$ contains arbitrarily long
		intervals of consecutive integers.  Let $B\subseteq\N$ be a Sidon set
		and let $\omega:[1,\infty)\to(0,\infty)$ satisfy
		$\omega(x)\to\infty$.  Then there exists $A\subseteq\N$ such that
		$d_A(n)=f(n)$ for every $n\in\N$ and
		\[
		B(x/3)-\omega(x)\leq A(x)\leq B(x/3)+\omega(x)
		\qquad(x\geq1).
		\]
	\end{theorem}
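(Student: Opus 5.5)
We will run a Cilleruelo--Nathanson type inductive construction with core $3B$: the set $3B=\{3b:b\in B\}$ is a Sidon set, and $(3B)(x)=B(x/3)$. We build finite sets $A_1\subseteq A_2\subseteq\cdots$ and set $A=\bigcup_k A_k$. At each stage $A_k$ is the union of an initial segment of $3B$, from which a finite number of elements have been deleted, and a finite set of \emph{remote} elements. We keep three invariants. First, $d_{A_k}(n)\le f(n)$ for all $n$. Second, for every $n$ at most the current target $m_k$, $d_{A_k}(n)=f(n)$, and no later step changes $d(n)$ for these $n$. Third, the remote elements lie so far out that the counting function stays within $\omega(x)$ of $B(x/3)$. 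Since $\omega$ is not monotone, we first replace it by a nondecreasing minorant $\tilde\omega\le\omega$ with $\tilde\omega\to\infty$. We then make sure that the $j$-th remote block is placed beyond a point $x_j$ with $\tilde\omega(x_j)>2j+C$. The total number of deletions and insertions before $x$ is then $O(j)$.

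The basic step fixes the smallest $n$ with $d_{A_k}(n)<f(n)$. We add a pair $\{y,y+n\}$ with $y$ huge. The new differences are $\pm n$, $y-a$, $y+n-a$ and $a-y$ for $a\in A_k$. Taking $y$ large and generic, all of these except $n$ are new, distinct and larger than every difference already present, so $d(n)$ rises by exactly one.

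The difficulty is that these large new differences $y-a$ and $y+n-a$ acquire representation number $1$ for the rest of the construction. So we must make sure that, as the target $m$ grows past them, the value $1$ is allowed, or that we can raise them later. Here the hypothesis on $S_f$ enters, in place of the compatibility condition (1.1) used in Theorem~\ref{thm:half}. We choose $y$ so that every new large difference lands inside a long interval $I\subseteq S_f$. Concretely, $y-a$ and $y+n-a$ for $a\in A_k$ lie in a window of length at most $\max A_k+n$. By hypothesis there is an interval $I$ of that length, plus a margin, inside $S_f$ and beyond everything so far, and we take $y$ so that the window lies in $I$. Then every new difference $t$ has $f(t)\ge2$. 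Later, when $t$ becomes the smallest deficient number, a further remote pair $\{y',y'+t\}$ raises it. That pair creates new differences in yet another long interval of $S_f$, and so on. Thus every unavoidable collateral difference has a representation deficit that can be repaired. The values $n$ with $f(n)=1$ are never hit collaterally, because collateral differences avoid $\mathbb N\setminus S_f$ by construction.

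Deletions from the core handle the remaining problem. The core $3B$ is Sidon, so it contributes at most $1$ to each $d(n)$. If $n\notin S_f$ is already represented, or if we need to exceed $f$, no problem arises. If instead some $n$ with $f(n)=1$ must receive exactly one representation, we use a remote pair as above, first deleting the core element that represents $n$ if one exists. The factor $3$ is used to keep core differences, which are multiples of $3$, disjoint from the finitely many differences between remote elements and core elements that might otherwise collide. With scaling by $3$ we can choose $y$ in a residue class mod $3$ such that $y-a\not\equiv 0 \pmod 3$ for core $a$, while $I$ is still long enough to contain the window; this is the reason for the scale $B(x/3)$ instead of $B(x/2)$. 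We must also check that cross-differences between different remote blocks, and the ones involving $y+n$, do not coincide. This follows from taking each new $y$ exponentially beyond the previous blocks: the differences $y'-y$ and $y'-(y+n)$ then lie in the same far window $I'$ and are distinct.

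The main obstacle is the bookkeeping that makes the invariants compatible. Every difference ever created must either be final with value $f$, or be deficient with $f\ge2$ and pending repair, or be strictly below $f$ and never touched again until the target reaches it. We also need every $n$ to be eventually settled, which is ensured by always treating the smallest deficient $n$. Finally, the density control asks that the total number of remote elements and deletions below $x$ be at most $\tilde\omega(x)$. This holds because each stage adds $O(1)$ elements at locations chosen past $x_j$. Deletions happen only for core elements that already represent a value we repair remotely, and there is at most one per stage, again $O(j)$ below $x_j$. This gives $|A(x)-B(x/3)|\le\omega(x)$ for all $x\ge1$, after padding the first stage by choosing $x_1$ large enough.
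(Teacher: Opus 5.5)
\textbf{Verdict: there is a genuine gap.} Your insertion step is the finite-core argument (essentially the paper's Lemma~\ref{lem:finite-insertion}), and it breaks once the core $3B$ is infinite.

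You take each $A_k$ to be finite (an initial segment of $3B$ plus remote elements), but you never say how the rest of $3B$ enters $A$. It must enter: your only other elements are the few remote ones, so the lower bound $A(x)\geq B(x/3)-\omega(x)$ forces almost all of $3B$ into $A$. Once core elements near or beyond a remote $y$ are present, the cross-differences $\lvert y-e\rvert$ and $\lvert y+n-e\rvert$ with $e\in 3B$ take values of every size, not only values in the single window you placed inside $S_f$. Your genericity argument for $y$ only handles the finitely many elements present when $y$ is chosen; the later core elements are forced on you by $B$.

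Consequently, three of your claims fail: that the new differences exceed all old ones, that collateral differences avoid $\N\setminus S_f$, and that $d(n)$ rises by exactly one. Concretely:
\begin{itemize}
\item A core element at $y-n$ or $y+2n$ gives $n$ an extra representation.
\item A coincidence $\lvert y-e_1\rvert=\lvert y'-e_2\rvert$, or $\lvert y-e\rvert$ equal to a difference inside the remote set, can overfill a value with $f=1$ or an already settled value.
\item Nestedness $A_k\subseteq A_{k+1}$ is incompatible with the deletions you later invoke, and your bound of at most one deletion per stage has no justification.
\end{itemize}
Your mod-$3$ idea is right, but its job is to separate the infinitely many remote--core differences from core--core differences, not finitely many.

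The missing ingredient is the quantitative control supplied by the paper's ternary repair lemma.
\begin{itemize}
\item The new pair $\{z,z+u\}$ is placed inside a long interval of $S_f$, with $z,z+u\not\equiv0\pmod 3$. The local Sidon bound $\binom{\lvert E\cap I\rvert}{2}\leq\ell$ then lets one also require $z-u,z+2u\notin E$.
\item For fixed noncore $x,y'$, the equations $e_1-e_2=x-y'$ and $e_1+e_2=x+y'$ have at most three nontrivial solutions in the Sidon set $E$. For fixed $x$, each difference inside $Q$ equals $\lvert x-e_1\rvert$ for at most two values of $e_1$. Together these bound the total excess by $4(\lvert Q\rvert+2)^2$.
\item This excess is removed by deleting core elements on overfull differences while protecting $P\cup H$. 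The safe interval is what guarantees that a deletable element always exists: the protected cross-differences lie in $[z-M,z+M]\subseteq S_f$ and have at most two representations each.
\item This costs $O(h^2)$ deletions at stage $h$, which is why the paper uses the schedule $\omega(x)>16h^3$ rather than an $O(j)$ count.
\end{itemize}
Without this Sidon collision count, or an equivalent bound on how many core elements must be skipped if you add them one at a time, your argument covers only a finite core, which is the situation of Lemma~\ref{lem:finite-core}.
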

	
	Theorems~\ref{thm:half} and~\ref{thm:third} allow $f(n)=1$ for
	infinitely many $n$.  For example, Theorem~\ref{thm:third} applies
	when $f(n)=2$ on $[k!,k!+k]$ for every $k$ and $f(n)=1$ elsewhere.
	
	Ruzsa~\cite{Ruzsa1998} constructed an infinite Sidon set $B$ satisfying
	$B(x)=x^{\sqrt2-1+o(1)}$.  Theorem~\ref{thm:third} therefore has the
	following explicit density consequence.
	
	\begin{corollary}\label{cor:ruzsa}
		Let $f:\N\to\N$ and suppose that $S_f$ contains arbitrarily long
		intervals of consecutive integers.  Then there exists $A\subseteq\N$
		such that $d_A(n)=f(n)$ for every $n\in\N$ and
		$A(x)=x^{\sqrt2-1+o(1)}$ as $x\to\infty$.  In particular, this
		conclusion holds if $\liminf_{n\to\infty}f(n)\geq2$.
	\end{corollary}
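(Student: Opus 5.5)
The corollary should follow by feeding Ruzsa's Sidon set into Theorem~\ref{thm:third} with a slowly growing error function. Let $B$ be Ruzsa's infinite Sidon set from~\cite{Ruzsa1998}, so that $B(x)=x^{\sqrt2-1+o(1)}$. Choose $\omega(x)=\log(x+2)$. This is positive on $[1,\infty)$ and tends to infinity. The hypothesis on $f$ is exactly the hypothesis of Theorem~\ref{thm:third}. That theorem therefore gives a set $A\subseteq\N$ with $d_A(n)=f(n)$ for all $n\in\N$ and
\[
B(x/3)-\log(x+2)\leq A(x)\leq B(x/3)+\log(x+2)\qquad(x\geq1).
\]

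Next we read off the growth of $A(x)$. Because $B(y)=y^{\sqrt2-1+o(1)}$ as $y\to\infty$, we have
\[
B(x/3)=(x/3)^{\sqrt2-1+o(1)}=x^{\sqrt2-1+o(1)},
\]
since the constant factor $3^{-(\sqrt2-1+o(1))}$ is absorbed into $x^{o(1)}$. The exponent $\sqrt2-1$ is positive, so $B(x/3)\geq x^{(\sqrt2-1)/2}$ for all large $x$. Hence $\log(x+2)=o(B(x/3))$, and so
\[
A(x)=B(x/3)\bigl(1+o(1)\bigr)=x^{\sqrt2-1+o(1)}.
\]
The only point requiring care is that the additive error must be negligible against a power of $x$. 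Any $\omega$ that tends to infinity and satisfies $\omega(x)=x^{o(1)}$ would work equally well, which is why $\log(x+2)$ is a convenient choice.

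Finally, the case $\liminf_{n\to\infty}f(n)\geq2$. Then $f(n)\geq2$ for all $n\geq n_0$, so $S_f\supseteq[n_0,\infty)\cap\N$. In particular $S_f$ contains arbitrarily long intervals of consecutive integers, and the first part applies. There is no genuine obstacle in this argument. All of the difficulty sits inside Theorem~\ref{thm:third}, which we are allowed to assume.
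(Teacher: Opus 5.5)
Your proposal is correct and is essentially the paper's proof: apply Theorem~\ref{thm:third} to Ruzsa's Sidon set with a logarithmic $\omega$ (the paper takes $\omega(x)=\log(x+1)$ rather than $\log(x+2)$), then absorb both the additive error and the factor $3^{-(\sqrt2-1+o(1))}$ into $x^{o(1)}$. You also spell out the reduction of the $\liminf_{n\to\infty}f(n)\geq2$ case, which the paper leaves implicit.
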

	
	\begin{proof}
		Apply Theorem~\ref{thm:third} to Ruzsa's set with
		$\omega(x)=\log(x+1)$.  Then
		$A(x)=B(x/3)+O(\log(x+1))=x^{\sqrt2-1+o(1)}$.
	\end{proof}
	
	For reference, the hypotheses and density conclusions of the closest
	results may be summarized as follows.
	
	\begin{table}[ht]
		\centering
		\small
		\renewcommand{\arraystretch}{1.15}
		\begin{tabular}{@{}p{0.18\textwidth}p{0.48\textwidth}p{0.25\textwidth}@{}}
			\hline
			Result & Prescribed representation problem & Density conclusion \\
			\hline
			Fang~\cite{Fang2020} & Difference and two-term sum functions on
			$\mathbb Z$; both target functions are eventually at least $2$ &
			$A(-x,x)/\sqrt{x}$ has limsup at least $1/\sqrt2$ \\
			Fang~\cite{Fang2021} & Increasing $f:\N\to\N$ with $f\geq2$ &
			$A(x)=B(x/3)+O(\omega(x))$ \\
			Chen--Fang~\cite{ChenFang2027} & $f\equiv1$ on $\N$ &
			$A(x)=B(x/2)+O(\omega(x))$ \\
			Theorem~\ref{thm:main} & $\liminf f\geq2$, without monotonicity &
			$A(x)=B(x/2)+O(\omega(x))$ \\
			Theorem~\ref{thm:half} & $S_f$ contains long intervals centered at
			infinitely many elements of $2B$ & $A(x)=B(x/2)+O(\omega(x))$ \\
			Theorem~\ref{thm:third} & $S_f$ contains arbitrarily long intervals &
			$A(x)=B(x/3)+O(\omega(x))$ \\
			\hline
		\end{tabular}
		\caption{Prescribed difference functions and the resulting density scales.}
		\label{tab:comparison}
	\end{table}
	
	The interval hypotheses in Theorems~\ref{thm:half} and
	\ref{thm:third} are sufficient conditions.  We do not claim that they
	are necessary, nor that the dilations $2$ and $3$ are optimal under
	their respective hypotheses.  In particular, it remains open whether
	the assumption that $S_f$ contains arbitrarily long intervals alone
	always permits the scale $B(x/2)$.
	
	All three results use the same core-and-repair principle.  At each
	stage four facts are maintained: no representation number exceeds its
	target, the endpoints of every completed representation are protected,
	only a controlled number of core elements are removed, and all changes
	are made beyond a prescribed threshold.  A remote pair supplies one
	deficient difference.  The safe intervals in $S_f$ control its
	cross-differences, after which deletions from the Sidon core remove all
	excess.  For the even core $2B$, the required safe interval is aligned
	with a core element; the ternary core $3B$ permits the inserted pair to
	be placed in an arbitrary sufficiently long safe interval.  This is
	the source of the two density scales.  Sections~2 and~4 give the
	corresponding repair lemmas.
	Theorem~\ref{thm:half} is proved in Section~3, and the infinite case of
	Theorem~\ref{thm:third} is proved in Section~4.  Finite cores and
	Theorem~\ref{thm:main} are treated in Section~5.
	
	\section{A repair lemma for an even Sidon core}
	
	The next lemma is the main step of the construction.  Its role is to
	increase one deficient value of the representation function without
	allowing any value to exceed its prescribed upper bound.  At the same
	time, all previously protected representations remain untouched.
	
	\begin{lemma}[finite repair]\label{lem:repair}
		Let $f:\N\to\N$, and let $E$ be an infinite Sidon set consisting of
		even positive integers.  Suppose that, for every $T\in\N$, there are
		infinitely many $e\in E$ such that
		\[
		[e-T,e+T]\cap\N\subseteq S_f.
		\tag{2.1}\label{eq:E-compatibility}
		\]
		Fix $L\in\N$.  Let $O$ be a finite set of odd positive integers and let
		$C$ be a finite subset of $E$.  Put $D=O\cup(E\setminus C)$ and suppose
		that
		\[
		d_D(n)\leq f(n)\qquad(n\in\N).
		\tag{2.2}\label{eq:repair-upper}
		\]
		Let $u\in\N$ satisfy
		\[
		d_D(u)<f(u).
		\tag{2.3}\label{eq:target-deficient}
		\]
		Let $P$ be a finite subset of $D$.
		Then there exist a set $X$ of odd positive integers and a set
		$R\subseteq E\setminus C$ such that, on putting
		$D_1=(O\cup X)\cup\bigl(E\setminus(C\cup R)\bigr)$, the following
		properties hold:
		\begin{enumerate}
			\item[(i)] $\card{X}\leq2$, $\card{R}\leq10(\card{O}+2)$, and
			$X\cup R\subseteq(L,\infty)$;
			\item[(ii)] $P\subseteq D_1$ and
			$d_{D_1}(n)\leq f(n)$ for every $n\in\N$;
			\item[(iii)] $d_{D_1}(u)=d_D(u)+1$.
		\end{enumerate}
	\end{lemma}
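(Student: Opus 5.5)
The plan is to add one or two odd numbers far beyond $L$ and beyond every element of $P$. Every unwanted new coincidence of differences will then be killed by deleting one large element of the even Sidon core. We split according to the parity of $u$.

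\emph{Case $u$ even.} Put $X=\{x,x+u\}$ with $x$ odd. The new pair $(x+u,x)$ supplies the extra representation of $u$. Here $T$ is a bound, fixed in advance, on all the small shifts involved: it should exceed $u+2\max(O\cup\{1\})$ and be large enough for the choice of $t$ below. By \eqref{eq:E-compatibility} we choose $e\in E$ with $[e-T,e+T]\cap\N\subseteq S_f$ and $e$ enormous compared with $L$, $\max P$, $\max O$, $\max C$ and $u$. We then set $x=e+t$ for a suitable odd $t$ with $|t|\le T/2$.

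The new differences are of three kinds.
\begin{itemize}
\item The differences $x-o$ and $x+u-o$ with $o\in O$ are even and lie in $[e-T,e+T]$, so $f\ge 2$ there. Even differences of $D$ come only from $O\times O$ or $E\times E$. By the Sidon property $d_{E}(m)\le1$, and $O$ is tiny compared with $e$, so the old count is at most $1$. After the addition it is therefore at most $2\le f$. We need $x-o\neq x+u-o'$ to avoid double new hits, and may need to shift $t$ to handle this.
\item The differences $x-e'$, $e'-x$, $x+u-e'$ and $e'-(x+u)$ with $e'\in E\setminus C$ are odd. They can collide only with old odd differences $\pm(e''-o)$ or with each other. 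A collision $x-e'=e''-o$ means $e'+e''=x+o$, and $e'-x=o-e''$ means $e'+e''=x+o$ again. By the Sidon property each fixed sum $s=x+o$ or $s=x+u+o$ has at most one unordered representation $e'+e''$. So each $o\in O$, with each sign pattern and each of the two new points, contributes at most one bad pair. Cross-collisions among new differences, such as $x-e'=(x+u)-e''$, mean $e''-e'=u$, which has at most one solution. Collisions with $u$ itself or with $x-o$ are handled similarly.
\item For each bad pair $\{e',e''\}$, the sum is about $x$, so one element exceeds $x/2>\max(L,\max P)$. We put that element into $R$.
\end{itemize}
Deleting an element only lowers representation counts, and it does not destroy the pair $(x+u,x)$. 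Counting the families above gives $\card{R}\le 10(\card{O}+2)$. Property (ii) follows because every surviving new difference has been checked against $f$, and $P\subseteq D_1$ because all deletions exceed $\max P$.

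\emph{Case $u$ odd.} Put $X=\{x\}$ with $x=e+u$, where $e\in E\setminus C$ is a huge element satisfying \eqref{eq:E-compatibility} with the same $T$. Then $(x,e)$ gives the new representation of $u$. We must keep $e$ out of $R$, so any bad pair containing $e$ must be handled by deleting its partner. That partner is also large, except in the pair involving $u$ itself, which is the intended representation. The remaining analysis is identical: the even differences $x-o$ lie in the safe interval around $e$, and the odd differences $\pm(x-e')$ produce collisions governed by the Sidon sums $e'+e''=x\pm o$ and so on.

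The main obstacle is the exhaustive bookkeeping of collisions. We must check that every new difference either lands in the safe interval, where $f\ge2$ and the old count is at most $1$, or is removed by a single deletion from a Sidon pair. The choices must also ensure that deleting the large element of each pair never destroys the target representation and never touches $P$. We would first list all equations of the shape $e'+e''=s$ and $e''-e'=s$ for the finitely many shifts $s$, and count them to obtain the constant $10(\card{O}+2)$.
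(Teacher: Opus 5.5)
Your proposal has a genuine gap in the case $u$ even. It comes from the claim that every bad pair $\{e',e''\}$ has sum close to $x$, so that its larger element is deletable.

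Suppose $D$ already contains a representation $(e'+u,e')$ of $u$ with $e',e'+u\in E\setminus C$. This is allowed, since only $d_D(u)<f(u)$ is assumed. Then for \emph{every} choice of $t$ you get $x-e'=(x+u)-(e'+u)$, so the odd number $m=x-e'$ acquires two new representations.

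\begin{itemize}
\item This bad pair has sum $2e'+u$, not $\approx x$.
\item Neither $e'$ nor $e'+u$ may be deleted. Doing so destroys an old representation of $u$, and since $(x+u,x)$ is the only new one, you would end with $d_{D_1}(u)=d_D(u)$, violating (iii).
\item The only way out is $f(m)\geq2$. Your window only satisfies $T>u+2\max(O\cup\{1\})$, so $m=e+t-e'$ need not lie in $[e-T,e+T]$.
\end{itemize}

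Concretely, take $100,102\in E$, $u=2$, $O=C=\varnothing$, $f(2)=2$, $T=10$, and $f(e+t-100)=1$. Then $d_{D\cup X}(m)\geq2>f(m)$, and no admissible deletion repairs it.

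The same phenomenon appears elsewhere.
\begin{itemize}
\item For even differences: if $o,o+u\in O$, then $x-o=(x+u)-(o+u)$ independently of $t$. So shifting $t$ cannot remove the double hit; you must instead delete the large endpoint of the $E\times E$ representation of $x-o$.
\item In the odd case: the new representation $(x,u+o)$ collides with the old $(e,o)$ at difference $e-o$. The large element $e$ is frozen, and the partner $u+o$ is small and is itself an endpoint of a representation of $u$. This contradicts your remark that the partner is large except at $u$ itself. It is rescued only because $e-o$ happens to lie in your window.
\end{itemize}

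What is missing is a mechanism for collisions among frozen elements, which cannot be deleted away. The paper does the following.
\begin{itemize}
\item It enlarges $P$ by $D\cap[1,L]$ and by the endpoints of all representations of $u$.
\item It takes $M>\max(P\cup O\cup C\cup\{L,u,1\})$ and chooses $e>4M$ with $[e-2M,e+2M]\cap\N\subseteq S_f$. Then every difference between a new element and a protected element lies in $S_f$.
\item It bounds the total excess by $10(\card{O}+2)$ and deletes greedily.
\end{itemize}
The greedy deletion is justified by a claim about $N=O\cup(P\cap E)\cup H$. If every representation of an overfull $n$ used only elements of $N$, then
\begin{itemize}
\item $d_N(n)\leq d_D(n)+\mathbf{1}_{n=u}\leq f(n)$ for $n\leq M$;
\item $d_N(n)=0$ for $M<n\leq3M$;
\item $d_N(n)\leq2\leq f(n)$ for $n>3M$, since each element of $H$ has at most one partner in $S$.
\end{itemize}
This is a contradiction, so some representation of $n$ always contains a deletable core element.

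Your pair-by-pair deletion scheme could be repaired along these lines. You would enlarge $T$ beyond all protected elements, including the endpoints of representations of $u$, and prove that undeletable collisions are at most double and land in $S_f$. As written, neither ingredient is present, and the bound $\card{R}\leq10(\card{O}+2)$ is asserted rather than counted.
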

	
	\begin{proof}
		Enlarge $P$, if necessary, by adjoining $D\cap[1,L]$ and the endpoints
		of every representation of $u$ in $D$, and continue to denote the
		enlarged set by $P$.  The enlarged set is finite because
		$d_D(u)\leq f(u)<\infty$.
		
		Choose an integer $M$ such that
		\[
		M>\max\bigl(P\cup O\cup C\cup\{L,u,1\}\bigr).
		\tag{2.4}\label{eq:M-choice}
		\]
		By \eqref{eq:E-compatibility}, we may choose $e\in E$ such that
		\[
		e>4M,\qquad [e-2M,e+2M]\cap\N\subseteq S_f,
		\qquad e+2u\notin E.
		\tag{2.5}\label{eq:anchor-choice}
		\]
		Indeed, the first two conditions hold for infinitely many $e\in E$,
		whereas the Sidon property permits at most one $e\in E$ with
		$e+2u\in E$.
		We divide the proof into three steps.
		
		\medskip
		\noindent
		\emph{Step 1: inserting one new representation of $u$.}
		
		Suppose first that $u$ is odd.  Since $e>M>\max C$, we have
		$e\in E\setminus C$.  Set
		\[
		X=\{e+u\},\qquad H=\{e,e+u\}.
		\tag{2.6}\label{eq:odd-block}
		\]
		The new element $e+u$ is odd.  Any new representation of the odd
		integer $u$ must pair $e+u$ with an even element.  The equation
		\[
		\lvert(e+u)-y\rvert=u
		\]
		has only the two possible solutions $y=e$ and $y=e+2u$.  The second
		does not belong to $E$ by \eqref{eq:anchor-choice}, while the first gives
		the intended representation
		\[
		(e+u)-e=u.
		\]
		Thus exactly one new representation of $u$ is created.
		
		Suppose next that $u$ is even.  Set
		\[
		X=H=\{e+1,e+1+u\}.
		\tag{2.7}\label{eq:even-block}
		\]
		Both new elements are odd and
		\[
		(e+1+u)-(e+1)=u.
		\]
		A representation of the even integer $u$ cannot use one odd and one
		even endpoint.  Hence any additional new representation of $u$ would
		have to join one of $e+1,e+1+u$ to an old element of $O$.  The only
		possible old candidates are $e+1-u$ and $e+1+2u$, but both are larger
		than $M$ and
		therefore do not belong to $O$.  Again, exactly one new representation
		of $u$ is created.
		
		In both parity cases, $X$ contains at most two odd integers, every
		element of $H$ is larger than $4M$, every cross-difference between $H$
		and a number smaller than $M$ belongs to $S_f$, and the two elements of
		$H$ have difference $u$.  Put
		\[
		D^*=D\cup X.
		\]
		
		\medskip
		\noindent
		\emph{Step 2: bounding the total excess created by $X$.}
		
		For a positive integer $n$, call the representations of $n$ that use
		at least one element of $X$ \emph{new}.  Since \eqref{eq:repair-upper}
		holds, every unit of excess
		\[
		\bigl(d_{D^*}(n)-f(n)\bigr)_+
		\]
		must be caused by a new representation.
		
		Write
		\[
		s=\card{X}\leq 2,\qquad q=\card{O}.
		\]
		If $n$ is even, a new representation of $n$ must use two odd
		endpoints.  Therefore its endpoints consist either of one element of
		$X$ and one element of $O$, or of two distinct elements of $X$.
		Consequently, the total number of new representations of even
		differences is at most
		\[
		sq+\binom{s}{2}.
		\tag{2.8}\label{eq:even-new-count}
		\]
		For each even $n$, the old bound $d_D(n)\leq f(n)$ implies that the
		excess at $n$ is at most the number of new representations of $n$.
		Thus \eqref{eq:even-new-count} also bounds the total excess over all
		even differences.
		
		Now let $n$ be odd.  Every representation of $n$ has one odd endpoint
		and one even endpoint.  A new representation may thus be written as
		the pair $\{x,e_1\}$ with
		\[
		x\in X,\qquad e_1\in E\setminus C,\qquad
		\lvert x-e_1\rvert=n.
		\]
		If it has the same difference as another representation
		$\{y,e_2\}$, where
		\[
		y\in O\cup X,\qquad e_2\in E\setminus C,
		\]
		then
		\[
		\lvert x-e_1\rvert=\lvert y-e_2\rvert.
		\]
		Removing the absolute values gives one of the two equations
		\[
		e_1-e_2=x-y
		\tag{2.9}\label{eq:collision-difference}
		\]
		or
		\[
		e_1+e_2=x+y.
		\tag{2.10}\label{eq:collision-sum}
		\]
		Since $E$ is Sidon, for fixed $x$ and $y$ equation
		\eqref{eq:collision-difference} has at most one nontrivial ordered
		solution.  When $x=y$, its only solutions have $e_1=e_2$ and correspond
		to the same representation, so they are not collisions.  For the sum
		equation, suppose that $e_1+e_2=e'_1+e'_2$ and that the two unordered
		pairs are distinct.  After relabeling, $e_1<e'_1$ and $e_2>e'_2$, so
		$e'_1-e_1=e_2-e'_2$ gives two representations of the same positive
		difference, contrary to the Sidon property.  Hence a fixed sum has at
		most one unordered representation in $E$, and therefore at most two
		ordered representations.  Thus, for every fixed pair
		$(x,y)\in X\times(O\cup X)$, there are at most three nontrivial
		collisions.
		
		It remains to explain why counting collisions controls the excess.
		Fix an overfull odd difference $n$, and let $a$ and $b$ be respectively
		the numbers of old and new representations of $n$.  Then
		\[
		a\leq f(n),\qquad a+b>f(n)\geq1.
		\]
		The number of unordered pairs of distinct representations of $n$ in
		which at least one representation is new is
		\[
		ab+\binom{b}{2}.
		\]
		If $a\geq1$, then
		\[
		a+b-f(n)\leq b\leq ab+\binom{b}{2}.
		\]
		If $a=0$, then $b>f(n)\geq1$, so $b\geq2$ and
		\[
		b-f(n)\leq b-1\leq\binom{b}{2}.
		\]
		Hence the excess at $n$ is at most the number of collisions involving
		a new representation.  Summing over all odd $n$ and using
		\eqref{eq:collision-difference}--\eqref{eq:collision-sum}, we obtain
		the bound $3s(q+s)$ for the total odd excess.
		
		Combining this with \eqref{eq:even-new-count} gives
		\begin{equation}
			\begin{aligned}
				\sum_{n\geq1}\bigl(d_{D^*}(n)-f(n)\bigr)_+
				&\leq sq+\binom{s}{2}+3s(q+s)\\
				&\leq 10(q+2).
			\end{aligned}
			\tag{2.11}\label{eq:total-excess}
		\end{equation}
		Indeed, $s\leq2$ makes the first line at most
		$2q+1+6(q+2)=8q+13$, which is no larger than $10(q+2)$.
		In particular, only finitely many differences are overfull.
		
		\medskip
		\noindent
		\emph{Step 3: deleting even elements that cause excess.}
		
		The elements of $P\cup H$ will never be deleted.  We first prove that
		whenever a current set has an overfull difference, some representation
		of that difference contains a deletable even element; that is, an element of
		\[
		(E\setminus C)\setminus(P\cup H).
		\tag{2.12}\label{eq:deletable-elements}
		\]
		
		Suppose, to the contrary, that every representation of some overfull
		difference $n$ avoids the set in \eqref{eq:deletable-elements}.  Then
		all its endpoints lie in
		\[
		N=S\cup H,\qquad S=O\cup(P\cap E).
		\]
		Every element of $S$ is smaller than $M$, every element of $H$ is
		larger than $4M$, and the two elements of $H$ have difference
		$u<M$.  We consider three ranges for $n$.
		
		If $n\leq M$, a pair joining $S$ to $H$ cannot represent $n$, because
		the distance between these two sets is greater than $3M$.  The only
		new representation inside $N$ is the pair $H$ at the target difference
		$u$.  Therefore
		\[
		d_N(n)\leq d_D(n)+\mathbf 1_{n=u}\leq f(n),
		\]
		where the last inequality follows from
		\eqref{eq:repair-upper} and \eqref{eq:target-deficient}.
		
		If $M<n\leq3M$, no pair inside $N$ represents $n$: differences inside
		$S$ and inside $H$ are at most $M$, whereas differences between $S$
		and $H$ are greater than $3M$.
		
		Finally, if $n>3M$, every representation inside $N$ joins one element
		of $H$ to one element of $S$.  Once the endpoint in $H$ is fixed, the
		other endpoint is uniquely determined.  Moreover, the choice of $e$
		in \eqref{eq:anchor-choice} gives
		\[
		n\in[e-2M,e+2M]\cap\N\subseteq S_f.
		\]
		Since $H$ has two elements,
		\[
		d_N(n)\leq2\leq f(n).
		\]
		Thus $N$ has no overfull difference in any of the three ranges, a
		contradiction.  This proves the claim.
		
		Starting from $D^*$, choose an overfull difference and delete one even
		element supplied by the claim.  Deletion cannot increase any
		representation number.  Moreover, the positive quantity on the
		left-hand side of \eqref{eq:total-excess} decreases by at least one,
		because the chosen deletion destroys a representation of an overfull
		difference.  The process must therefore stop after at most $10(q+2)$
		deletions.  Let $R$ be the set of deleted elements and put
		\[
		D_1=(O\cup X)\cup\bigl(E\setminus(C\cup R)\bigr).
		\]
		Then $d_{D_1}(n)\leq f(n)$ for all $n$ and
		\[
		\card{R}\leq10(q+2)=10(\card{O}+2).
		\]
		
		No element of $P\cup H$ was deleted.  Hence all old representations of
		$u$ survive, and the new representation with endpoint set $H$ also
		survives.  Step 1 showed that this was the only new representation of
		$u$, so
		\[
		d_{D_1}(u)=d_D(u)+1.
		\]
		Finally, if a deleted element were at most $L$, it would belong to
		$D\cap[1,L]\subseteq P$, contrary to the rule that $P$ is protected.
		Thus every element of $R$ is larger than $L$.  The same is true of
		every element of $X$ by \eqref{eq:M-choice}.  All assertions of the
		lemma now follow.
	\end{proof}
	
	\section{Proof of Theorem~\ref{thm:half}}
	
	Put
	\[
	E=2B=\{2b:b\in B\}.
	\]
	Then $E$ is an infinite Sidon set consisting of even positive integers,
	and
	\[
	E(x)=B(x/2).
	\tag{3.1}\label{eq:E-count}
	\]
	Condition \eqref{eq:compatibility} says precisely that $E$ satisfies
	\eqref{eq:E-compatibility}.
	
	Because $\omega(x)\to\infty$, there is a strictly increasing sequence
	of positive integers
	\[
	L_1<L_2<L_3<\cdots
	\]
	such that, for every $h\geq1$,
	\[
	\omega(x)>10h(h+1)\qquad(x>L_h).
	\tag{3.2}\label{eq:omega-schedule}
	\]
	
	We construct increasing sequences of finite sets
	\[
	\varnothing=O_0\subseteq O_1\subseteq O_2\subseteq\cdots,
	\qquad
	\varnothing=C_0\subseteq C_1\subseteq C_2\subseteq\cdots\subseteq E,
	\]
	and
	\[
	\varnothing=P_0\subseteq P_1\subseteq P_2\subseteq\cdots.
	\]
	Here $O_h$ consists of the odd integers inserted by the end of stage
	$h$, $C_h$ consists of the even elements deleted from $E$, and $P_h$
	is the set of elements that may not be deleted at any later stage.  Put
	\[
	D_h=O_h\cup(E\setminus C_h).
	\tag{3.3}\label{eq:stage-set}
	\]
	At every completed stage $h\geq1$, with $u_h$ denoting the difference
	treated at that stage, we require the following four properties:
	\begin{enumerate}
		\item
		\[
		d_{D_h}(n)\leq f(n)\qquad(n\in\N);
		\tag{3.4}\label{eq:stage-upper}
		\]
		\item $P_h\subseteq D_h$, $D_h\cap[1,L_h]\subseteq P_h$, and $P_h$
		contains both endpoints of every representation of every
		$n\leq u_h$ in $D_h$; moreover,
		\[
		d_{P_h}(n)=d_{D_h}(n)=f(n)
		\qquad(1\leq n<u_h);
		\]
		\item
		\[
		\begin{aligned}
			\card{O_h}&\leq2h,\\
			\card{C_h}&\leq10h(h+1);
		\end{aligned}
		\tag{3.5}\label{eq:stage-size}
		\]
		\item every element of $O_h\setminus O_{h-1}$ and
		$C_h\setminus C_{h-1}$ is larger than $L_h$.
	\end{enumerate}
	
	The second requirement is the permanent lower-bound invariant.  Since
	$P_h\subseteq P_k\subseteq D_k$ whenever $k\geq h$ and both stages
	are constructed, every
	representation counted by $d_{P_h}(n)$ survives all later stages.  In
	particular, once $d_{P_h}(n)=f(n)$ has been achieved, those $f(n)$
	representations will also belong to the final set.
	
	For $h=0$, set $D_0=E$.  Since $E$ is Sidon and $f(n)\geq1$,
	\eqref{eq:stage-upper} holds at stage $0$, while the size bounds are
	immediate.
	
	Assume that $D_{h-1}$ and $P_{h-1}$ have been constructed.  If
	$d_{D_{h-1}}(n)=f(n)$ for every $n$, no further modification is needed;
	we stop the construction at stage $H=h-1$.  If the construction has
	not stopped, let $u_h$ be the least positive integer for which
	\[
	d_{D_{h-1}}(u_h)<f(u_h).
	\tag{3.6}\label{eq:least-deficiency}
	\]
	Before applying the repair lemma, enlarge the protected set to
	\begin{align*}
		\widehat P_{h-1}={}&P_{h-1}\cup(D_{h-1}\cap[1,L_h])\\
		&{}\cup\bigcup_{1\leq n\leq u_h}
		\{a,a'\in D_{h-1}:a-a'=n\}.
		\tag{3.7}\label{eq:enlarged-protected-set}
	\end{align*}
	This set is finite.  Indeed, $D_{h-1}\cap[1,L_h]$ is finite, and
	\eqref{eq:stage-upper} implies that the last union contains at most
	\[
	2\sum_{n=1}^{u_h}f(n)
	\]
	elements.
	
	Apply Lemma~\ref{lem:repair} with
	\[
	O=O_{h-1},\quad C=C_{h-1},\quad
	P=\widehat P_{h-1},\quad u=u_h,
	\quad L=L_h.
	\]
	Let $X_h$ and $R_h$ be the sets supplied by the lemma.  Define
	\[
	\begin{aligned}
		O_h&=O_{h-1}\cup X_h,\\
		C_h&=C_{h-1}\cup R_h.
	\end{aligned}
	\tag{3.8}\label{eq:stage-update}
	\]
	and define $D_h$ by \eqref{eq:stage-set}.  Finally, let
	$P_h$ consist of $\widehat P_{h-1}$ together with both endpoints of
	every representation of every $n\leq u_h$ in $D_h$.
	The repair lemma gives \eqref{eq:stage-upper} and
	\[
	d_{D_h}(u_h)=d_{D_{h-1}}(u_h)+1.
	\tag{3.9}\label{eq:target-increment}
	\]
	The upper bound \eqref{eq:stage-upper} shows that $P_h$ is finite.  The
	repair lemma also shows that every element of $\widehat P_{h-1}$
	remains in $D_h$.  Hence $P_{h-1}\subseteq P_h\subseteq D_h$.  Moreover, all
	changes at stage $h$ occur above $L_h$, so
	\[
	D_h\cap[1,L_h]=D_{h-1}\cap[1,L_h]\subseteq P_h.
	\]
	For $n<u_h$, the definition of $u_h$ gives
	$d_{D_{h-1}}(n)=f(n)$.  All endpoints of these representations belong
	to $\widehat P_{h-1}$ and hence remain in $D_h$.  Together with
	\eqref{eq:stage-upper}, this gives
	\[
	d_{D_h}(n)=f(n)\qquad(1\leq n<u_h).
	\]
	Since $P_h$ contains the endpoints of every such representation, we
	also have $d_{P_h}(n)=f(n)$ in this range.  Thus requirement 2 holds,
	while the location statement in the repair lemma verifies requirement
	4.
	
	We verify \eqref{eq:stage-size}.  By induction,
	\[
	\card{O_h}\leq\card{O_{h-1}}+2\leq2h.
	\]
	Furthermore, Lemma~\ref{lem:repair} and the induction hypothesis give
	\begin{align*}
		\card{C_h}
		&\leq\card{C_{h-1}}+10(\card{O_{h-1}}+2)\\
		&\leq10(h-1)h+10(2h-2+2)\\
		&=10h(h+1).
	\end{align*}
	Thus requirement 3 also holds, and the induction step is complete.
	
	Requirement 2 shows that no integer smaller than $u_h$ becomes
	deficient.  At $u_h$, the representation number increases by exactly
	one according to \eqref{eq:target-increment}.  It follows that the
	successive values $u_h$ are nondecreasing.
	
	If the same value $u$ occurs at several consecutive stages, its
	representation number increases by one at each of those stages.  Since
	$f(u)$ is finite and the upper bound \eqref{eq:stage-upper} is never
	violated, $u$ can occur only finitely many times.  Thus, if the
	construction does not terminate, then $u_h\to\infty$.
	
	If the construction terminates at stage $H$, put $O=O_H$ and $C=C_H$.
	Otherwise, put $O=\bigcup_{h\geq0}O_h$ and
	$C=\bigcup_{h\geq0}C_h$.  In both cases define
	\[
	A=O\cup(E\setminus C).
	\tag{3.10}\label{eq:limit-set}
	\]
	For every constructed stage $h$,
	\[
	P_h\subseteq A.
	\]
	Indeed, a protected element remains in every later stage that is
	actually constructed.  If it is odd, it belongs to $O$; if it is even,
	it is never placed in $C$.
	
	If the construction terminates, its terminal set already satisfies
	$d_A(n)=f(n)$ for every $n$.  Otherwise $u_h\to\infty$.  Given
	$n\in\N$, choose $h$ with $u_h>n$.  Requirement 2 gives
	\[
	d_{P_h}(n)=f(n).
	\]
	Since $P_h\subseteq A$, it follows that
	\[
	d_A(n)\geq f(n)\qquad(n\in\N).
	\tag{3.11}\label{eq:limit-lower}
	\]
	To prove the reverse inequality, suppose that $A$ contained
	$f(n)+1$ distinct representations of some $n$.  These representations
	have only finitely many endpoints.  Every odd endpoint belongs to some
	$O_h$, so all odd endpoints belong to one common $O_h$ when $h$ is
	sufficiently large.  Every even endpoint belongs to $E\setminus C$ and
	therefore to $E\setminus C_h$ for every $h$.  Consequently, all
	$f(n)+1$ representations would already occur together in $D_h$ for
	some $h$, contradicting \eqref{eq:stage-upper}.  Thus
	\[
	d_A(n)\leq f(n)\qquad(n\in\N).
	\tag{3.12}\label{eq:limit-upper}
	\]
	Thus, in the nonterminating case, for the index $h$ chosen above we
	have the explicit chain
	\[
	f(n)=d_{P_h}(n)\leq d_A(n)\leq f(n).
	\]
	In the terminating case the same conclusion holds by the stopping
	rule.  Therefore
	\[
	d_A(n)=f(n)\qquad(n\in\N).
	\tag{3.13}\label{eq:exact-limit}
	\]
	
	It remains to estimate the counting function of $A$.  Fix $x\geq1$.
	If there is no $h\geq1$ for which $L_h<x$, then all modifications were
	made above $x$, and hence
	\[
	O(x)=C(x)=0.
	\]
	Otherwise, let
	\[
	j=\max\{h\geq1:L_h<x\}.
	\]
	Every modification made at a stage $h>j$ lies above $L_h\geq x$ and
	does not affect the counting functions at $x$; if the construction
	terminates before such a stage, there are simply fewer modifications.
	Therefore
	\[
	O(x)\leq2j\leq10j(j+1)<\omega(x),
	\qquad
	C(x)\leq10j(j+1)<\omega(x),
	\tag{3.14}\label{eq:limit-density-errors}
	\]
	where the strict inequalities follow from
	\eqref{eq:omega-schedule}.  Since $O$ is odd, $E$ is even, and
	$C\subseteq E$, equations \eqref{eq:E-count} and
	\eqref{eq:limit-set} give
	\[
	A(x)=E(x)+O(x)-C(x)
	=B(x/2)+O(x)-C(x).
	\]
	Together with \eqref{eq:limit-density-errors}, this yields
	\[
	B(x/2)-\omega(x)\leq A(x)\leq B(x/2)+\omega(x)
	\]
	for every $x\geq1$.  This proves Theorem~\ref{thm:half}.
	
	\section{A repair lemma for a ternary Sidon core}
	
	We now assume that $S_f$ contains arbitrarily long intervals.  Using a
	core contained in $3\N$ allows both endpoints of a newly inserted pair
	to be chosen outside the residue class of the core.
	
	\begin{lemma}[ternary repair]\label{lem:ternary-repair}
		Let $f:\N\to\N$ and suppose that $S_f$ contains arbitrarily long
		intervals.  Fix $L\in\N$.  Let $E$ be an infinite Sidon set contained
		in $3\N$, let $Q$ be a finite subset of $\N\setminus3\N$, and let
		$C$ be a finite subset of $E$.  Put $D=Q\cup(E\setminus C)$ and
		suppose that $d_D(n)\leq f(n)$ for every $n\in\N$.  Let $u\in\N$
		satisfy $d_D(u)<f(u)$, and let $P$ be a finite subset of $D$.
		Then there exist a two-element set $X\subseteq\N\setminus3\N$ and a
		set $R\subseteq E\setminus C$ such that, on putting
		$D_1=(Q\cup X)\cup\bigl(E\setminus(C\cup R)\bigr)$, the following
		properties hold:
		\begin{enumerate}
			\item[(i)] $\card{R}\leq4(\card{Q}+2)^2$ and
			$X\cup R\subseteq(L,\infty)$;
			\item[(ii)] $P\subseteq D_1$ and
			$d_{D_1}(n)\leq f(n)$ for every $n\in\N$;
			\item[(iii)] $d_{D_1}(u)=d_D(u)+1$.
		\end{enumerate}
	\end{lemma}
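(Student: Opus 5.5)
We mirror the proof of Lemma~\ref{lem:repair}, with the residue classes modulo $3$ playing the role of parity. First enlarge $P$ by $D\cap[1,L]$ and by the endpoints of all representations of $u$ in $D$. Choose $M>\max(P\cup Q\cup C\cup\{L,u,1\})$. Since $S_f$ contains arbitrarily long intervals, choose an interval $[t-2M,t+2M]\cap\N\subseteq S_f$ with $t>4M$. The anchor $t$ is now arbitrary, not a core element, so the inserted pair must be located off $E$.

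\emph{Step 1: insertion.} Choose $y\not\equiv0\pmod 3$ and $y+u\not\equiv0\pmod 3$ with $y\in[t-1,t+1]$; this is possible because at most two residues are forbidden. We may also shift $y$ by multiples of $3$ within a window of length $O(1)$, which keeps everything inside the safe range after replacing $2M$ by $2M-O(1)$. Put $X=H=\{y,y+u\}$. The new element $y+u$ is also not in $3\N$, as the lemma requires. Any additional new representation of $u$ joins an element of $H$ to $y-u$, $y+2u$, or to each other. The first two candidates are larger than $M$, hence not in $Q$. If such a candidate lies in $E$, we shift $y$ by $3$. The Sidon property, together with the finitely many bad values (at most four elements of $E$ can collide for the few $y$ considered), lets us pick a shift with $y-u,y+2u\notin E$. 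Thus exactly one new representation of $u$ is created.

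\emph{Step 2: total excess.} There is no parity splitting now, since a difference $n$ can be represented both by $Q\cup X$ pairs and by mixed pairs $\{x,e\}$ with $x\in X$ and $e\in E$. Count new representations and their collisions exactly as before:
\begin{itemize}
\item new representations with both endpoints in $Q\cup X$: at most $s(q+s)$, where $s=2$ and $q=\card{Q}$;
\item collisions of a mixed new representation $\{x,e_1\}$ with another representation. If the other is $\{y',e_2\}$ with $y'\in Q\cup X$, the equations $e_1\mp e_2=x\mp y'$ give at most three collisions per pair $(x,y')$, by the argument of Step 2 of Lemma~\ref{lem:repair}. If the other is a pair inside $E$, or inside $Q\cup X$, then $\card{x-e_1}=n$ is fixed and $E$ being Sidon leaves at most $O(1)$ choices for each such $n$. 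Each such $n$ arises from a pair in $Q\cup X$, so there are $O((q+2)^2)$ such collisions; a pair $\{e_2,e_3\}\subseteq E$ collides with $\{x,e_1\}$ only once per difference.
\end{itemize}
The overfull-implies-collision accounting from Lemma~\ref{lem:repair} then gives total excess at most $4(q+2)^2$ after routine bookkeeping. The quadratic bound comes from $Q$--$Q$ differences that mixed pairs can hit; these were excluded by parity in the even case.

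\emph{Step 3: deletion.} We claim that every overfull difference has a representation containing an element of $(E\setminus C)\setminus(P\cup H)$. The three-range argument is verbatim: $N=Q\cup(P\cap E)\cup H$, with $S$ below $M$ and $H$ above $4M$ inside the safe interval. For $n\le M$ the only new pair is $H$, at difference $u$. No pair represents $n\in(M,3M]$. For $n>3M$ we have $n\in S_f$ and $d_N(n)\le2$. Delete greedily; each deletion lowers the total excess, so $\card{R}\le4(\card{Q}+2)^2$. Protection of $P\cup H$ gives (ii) and (iii), and $X\cup R\subseteq(L,\infty)$ follows as before.

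The main obstacle is Step 2. Losing the parity separation means new mixed pairs can land on differences already represented within $Q$ or within $E$. Obtaining the explicit constant $4(q+2)^2$ requires care. If it fails, I would weaken it to $C(q+2)^2$, which still suffices for the density schedule.
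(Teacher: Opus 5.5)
\textbf{Gap in Step~2.} Your construction is the paper's: a remote pair $H=\{y,y+u\}$ avoiding $3\N$, the enlarged protected set, the three-range claim, and greedy deletion. But Step~2 has a genuine gap. You never bound collisions between a new core-cross representation $\{x,e_1\}$ (with $x\in X$, $e_1\in E\setminus C$) and an old representation with both endpoints in $E$.

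Your sentence ``each such $n$ arises from a pair in $Q\cup X$'' is false for these collisions. The phrase ``collides only once per difference'' yields no finite total, because there are infinitely many core-cross pairs and infinitely many differences in $E-E$. The collision equation $x+e_3=e_1+e_2$ has three free core variables, which the Sidon property alone does not control. As written, the total excess need not be finite, so the greedy deletion need not terminate and no bound on $\card{R}$ follows.

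\textbf{The missing observation.} This is the whole point of the ternary core. Since $x\not\equiv0\pmod3$ and $e_1\equiv0\pmod3$, we get $\lvert x-e_1\rvert\not\equiv0\pmod3$. Every difference of two elements of $E$ is divisible by $3$, so such collisions never occur. Your closing remark that mixed pairs ``can land on differences already represented within $E$'' is exactly backwards. The condition $X\cap3\N=\varnothing$ is not merely a formal requirement of the lemma.

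\textbf{Getting the constant.} With this observation, first peel off the internal new representations via $\bigl(a_n+b_n+c_n-f(n)\bigr)_+\leq b_n+\bigl(a_n+c_n-f(n)\bigr)_+$. The only collisions left to charge are those of a core-cross new representation with an old $Q$--$E$ pair, another core-cross new pair, or an old pair inside $Q$. This gives
$sq+\binom s2+3s(q+s)+2s\binom q2=2q^2+6q+13\leq4(q+2)^2$.
Your proposed count charges against all pairs inside $Q\cup X$ (two choices of $e_1$ each) and adds your $s(q+s)$ internal term. That totals $(q+2)(2q+10)$, which exceeds $4(q+2)^2$ at $q=0$. Your fallback $C(q+2)^2$ would not prove the lemma as stated.

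\textbf{A smaller point in Step~1.} ``At most four elements of $E$'' is not a valid count. The bad shifts correspond to elements of $E$ in two progressions of difference $3$. A Sidon set can occupy a large share of a short such progression, e.g.\ $\{3,6,12\}\subseteq\{3,6,9,12\}$. In general it can occupy on the order of $\sqrt k$ of $k$ terms, so no fixed small number of shifts is guaranteed to contain a good one by your reasoning.

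The right tool is the local bound $\binom{\card{E\cap I}}2\leq\ell$ for an interval $I$ of length $\ell$. Choose $\ell$ with $\lfloor\ell/3\rfloor>2(1+\sqrt{2\ell})$, as the paper does. The window is then still of bounded length, so your plan survives once the safe interval is taken of length $\ell+O(M)$.
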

	
	\begin{proof}
		Enlarge $P$ by adjoining $D\cap[1,L]$ and the endpoints of every
		representation of $u$ in $D$.  This preserves finiteness.  Choose
		\[
		M>\max\bigl(P\cup Q\cup C\cup\{L,u,1\}\bigr).
		\tag{4.1}\label{eq:ternary-M}
		\]
		We first select the new pair.  Choose $r\in\{1,2\}$ such that
		$r+u\not\equiv0\pmod3$.  Hence neither $r$ nor $r+u$ is congruent to
		$0$ modulo $3$.
		
		We use the elementary local bound
		\[
		\binom{\card{E\cap I}}2\leq \ell
		\tag{4.2}\label{eq:local-Sidon}
		\]
		for every interval $I$ of length $\ell$.  Indeed, all positive
		differences between elements of $E\cap I$ are distinct and at most
		$\ell$.  If $t=\card{E\cap I}$, then
		$t(t-1)/2\leq\ell$, and hence
		\[
		t\leq\frac{1+\sqrt{1+8\ell}}2\leq1+\sqrt{2\ell}.
		\tag{4.3}\label{eq:local-cardinality}
		\]
		Choose $\ell$ so large that
		\[
		\left\lfloor\frac{\ell}{3}\right\rfloor
		>2\bigl(1+\sqrt{2\ell}\bigr).
		\tag{4.4}\label{eq:length-choice}
		\]
		Since $S_f$ contains arbitrarily long intervals, there is an integer
		$a$ such that
		\[
		[a,a+\ell+6M]\subseteq S_f.
		\tag{4.5}\label{eq:safe-interval}
		\]
		Put $J=[a+4M,a+4M+\ell]$.  Since $J-u$ and $J+2u$ are intervals of
		length $\ell$, \eqref{eq:local-cardinality} gives separately
		\[
		\card{(E+u)\cap J}=\card{E\cap(J-u)}\leq1+\sqrt{2\ell}
		\]
		and
		\[
		\card{(E-2u)\cap J}=\card{E\cap(J+2u)}\leq1+\sqrt{2\ell}.
		\]
		On the other hand, $J$ contains at least $\lfloor\ell/3\rfloor$
		integers congruent to $r$ modulo $3$.  By \eqref{eq:length-choice},
		we may therefore
		choose $z\in J$ such that
		\[
		z\equiv r\pmod3,\qquad z-u\notin E,
		\qquad z+2u\notin E.
		\tag{4.6}\label{eq:ternary-z}
		\]
		Moreover,
		\[
		z>4M,\qquad [z-M,z+M]\subseteq S_f.
		\tag{4.7}\label{eq:ternary-safe}
		\]
		
		Set $X=H=\{z,z+u\}$.  Both elements of $X$ lie outside $3\N$.
		The pair $H$ represents $u$.  Any other new representation of $u$
		would use $z-u$ or $z+2u$.  These two numbers do not belong to $E$ by
		\eqref{eq:ternary-z}, and they are larger than $M$, so they do not
		belong to $Q$.  Hence
		\[
		d_{D\cup X}(u)=d_D(u)+1.
		\tag{4.8}\label{eq:ternary-increment}
		\]
		
		We next bound the excess created by $X$.  Write $q=\card{Q}$ and
		$s=\card{X}=2$.  There are at most
		\[
		sq+\binom{s}{2}
		\tag{4.9}\label{eq:exceptional-new}
		\]
		new representations whose two endpoints lie in $Q\cup X$.  Consider
		a new representation with endpoints $x\in X$ and $e_1\in E\setminus C$.
		Its difference is not divisible by $3$, so it cannot coincide with a
		difference represented by two elements of $E$.
		
		If it has the same difference as a representation with endpoints
		$y\in Q\cup X$ and $e_2\in E\setminus C$, then
		\[
		e_1-e_2=x-y\qquad\text{or}\qquad e_1+e_2=x+y.
		\]
		For each fixed $(x,y)$, the Sidon property gives at most three
		nontrivial collisions, as in the proof of Lemma~\ref{lem:repair}.
		Their total number is at most $3s(q+s)$.  Finally, for fixed $x$ and a
		fixed representation inside $Q$, the equality of its difference with
		$|x-e_1|$ determines at most two possible values of $e_1$.  These
		collisions contribute at most $2s\binom q2$.
		
		We make the charging argument explicit.  For each $n\geq1$, let
		$a_n=d_D(n)$, let $b_n$ be the number of new representations of $n$
		whose endpoints both lie in $Q\cup X$, and let $c_n$ be the number of
		new representations with one endpoint in $X$ and the other in
		$E\setminus C$.  Then
		$d_{D\cup X}(n)=a_n+b_n+c_n$ and $a_n\leq f(n)$.  Since adding the
		$b_n$ internal representations can increase the positive part by at
		most $b_n$,
		\[
		\bigl(a_n+b_n+c_n-f(n)\bigr)_+
		\leq b_n+\bigl(a_n+c_n-f(n)\bigr)_+.
		\]
		If $a_n\geq1$, then the second term is at most $c_n$ and hence at most
		$a_nc_n+\binom{c_n}{2}$.  If $a_n=0$ and the second term is positive,
		then $c_n>f(n)\geq1$, so it is at most
		$c_n-1\leq\binom{c_n}{2}$.  Thus every remaining unit of excess is
		charged to a pair of equal-difference representations involving a new
		core-cross representation.
		
		Summing over $n$, the total of the $b_n$ is at most
		\eqref{eq:exceptional-new}.  The collisions with a representation
		using an endpoint in $Q\cup X$ contribute at most $3s(q+s)$, and those
		with an old representation inside $Q$ contribute at most
		$2s\binom q2$.  Consequently,
		\begin{align*}
			\sum_{n\geq1}\bigl(d_{D\cup X}(n)-f(n)\bigr)_+
			&\leq sq+\binom{s}{2}+3s(q+s)+2s\binom q2\\
			&=2q^2+6q+13\\
			&\leq4(q+2)^2.
			\tag{4.10}\label{eq:ternary-excess}
		\end{align*}
		In particular, only finitely many differences are overfull.
		
		It remains to eliminate this excess.  The elements of $P\cup H$ will
		not be deleted.  Suppose that all representations of an overfull
		difference avoid the deletable set $(E\setminus C)\setminus P$.  Their
		endpoints then lie in $N=S\cup H$, where
		$S=Q\cup(P\cap E)$.  Every element of $S$ is smaller than $M$ and
		every element of $H$ is larger than $4M$.  Consequently,
		\[
		d_N(n)\leq
		\begin{cases}
			d_D(n)+\mathbf1_{n=u},&n\leq M,\\
			0,&M<n\leq3M,\\
			2,&n>3M.
		\end{cases}
		\tag{4.11}\label{eq:ternary-N}
		\]
		The first line is at most $f(n)$ by the deficiency at $u$.  In the
		last line, every represented difference joins $S$ to $H$ and hence
		belongs to $[z-M,z+M]\subseteq S_f$ by
		\eqref{eq:ternary-safe}; thus $2\leq f(n)$.  Therefore $N$ has no
		overfull difference, a contradiction.
		
		Starting from $D\cup X$, repeatedly delete a core element belonging to
		a representation of an overfull difference.  Each deletion decreases
		the quantity in \eqref{eq:ternary-excess} by at least one, so the
		process stops after at most $4(q+2)^2$ deletions.  Let $R$ be the set
		of deleted elements.  No element of $P\cup H$ is deleted; hence
		\eqref{eq:ternary-increment} survives and $P\subseteq D_1$.  If an
		element of $R$ were at most $L$, it would belong to
		$D\cap[1,L]\subseteq P$.  Thus $R\subseteq(L,\infty)$, and the same
		holds for $X$ by \eqref{eq:ternary-M} and \eqref{eq:ternary-z}.
	\end{proof}
	
	We now prove Theorem~\ref{thm:third} when $B$ is infinite.  Put
	$E=3B$, so that $E$ is an infinite Sidon set contained in $3\N$ and
	\[
	E(x)=B(x/3).
	\tag{4.12}\label{eq:ternary-core-count}
	\]
	Choose $L_1<L_2<\cdots$ such that
	\[
	\omega(x)>16h^3\qquad(x>L_h).
	\tag{4.13}\label{eq:ternary-schedule}
	\]
	
	We construct increasing finite sets $Q_h\subseteq\N\setminus3\N$,
	$C_h\subseteq E$, and $P_h$, and put
	\[
	D_h=Q_h\cup(E\setminus C_h).
	\tag{4.14}\label{eq:ternary-stage}
	\]
	The construction starts with $Q_0=C_0=P_0=\varnothing$.  At the end of
	stage $h$ we require
	\begin{enumerate}
		\item[(i)] $d_{D_h}(n)\leq f(n)$ for every $n\in\N$;
		\item[(ii)] $P_h\subseteq D_h$, $D_h\cap[1,L_h]\subseteq P_h$, and
		$P_h$ contains the endpoints of every representation of every
		$n\leq u_h$ in $D_h$; moreover,
		$d_{P_h}(n)=d_{D_h}(n)=f(n)$ for $1\leq n<u_h$;
		\item[(iii)] $\card{Q_h}\leq2h$ and $\card{C_h}\leq16h^3$;
		\item[(iv)] every change made at stage $h$ occurs above $L_h$.
	\end{enumerate}
	
	Given $D_{h-1}$, stop if every difference has its prescribed number of
	representations.  Otherwise, let $u_h$ be its least deficient
	difference.  Before
	applying Lemma~\ref{lem:ternary-repair}, enlarge $P_{h-1}$ by adjoining
	$D_{h-1}\cap[1,L_h]$ and the endpoints of every representation of
	every $n\leq u_h$.  The resulting set is finite because
	$d_{D_{h-1}}(n)\leq f(n)$.  Use this enlarged set as the protected set
	$P$ in Lemma~\ref{lem:ternary-repair}.  The lemma supplies sets $X_h$
	and $R_h$;
	define
	\[
	Q_h=Q_{h-1}\cup X_h,\qquad C_h=C_{h-1}\cup R_h,
	\]
	and let $P_h$ be the enlarged protected set together with the endpoints
	of all representations of every $n\leq u_h$ in $D_h$.  Properties
	(i), (ii), and (iv) follow
	from the lemma exactly as in Section~3.  Moreover,
	\[
	\card{Q_h}\leq2h,\qquad
	\card{C_h}\leq4\sum_{j=1}^h(2j)^2
	=16\sum_{j=1}^h j^2\leq16h^3,
	\tag{4.15}\label{eq:ternary-size}
	\]
	which proves (iii).
	
	The successive targets $u_h$ are nondecreasing, and each fixed value
	can occur only finitely many times.  Thus the construction either
	terminates at some stage $H$ with the prescribed representation
	function or, if it never terminates, satisfies $u_h\to\infty$.  In the
	terminal case put $Q=Q_H$ and $C=C_H$; otherwise put
	$Q=\bigcup_{h\geq0}Q_h$ and $C=\bigcup_{h\geq0}C_h$.  In both cases
	put
	\[
	A=Q\cup(E\setminus C).
	\tag{4.16}\label{eq:ternary-limit}
	\]
	Every protected set belonging to a constructed stage is contained in
	$A$.  If the process terminates, the stopping rule gives $d_A=f$.
	Otherwise, given $n$, choose $h$ with $u_h>n$.  Then
	\[
	d_A(n)\geq d_{P_h}(n)=f(n).
	\]
	If $A$ had $f(n)+1$ representations of $n$, their finitely many
	noncore endpoints would all lie in some $Q_h$, while every core
	endpoint would lie in $E\setminus C_h$.  Those representations would
	therefore occur together in $D_h$, contradicting (i).  Hence
	\[
	d_A(n)=f(n)\qquad(n\in\N).
	\tag{4.17}\label{eq:ternary-exact}
	\]
	
	For $x\geq1$, let $j$ be the largest index with $L_j<x$, if such an
	index exists.  Only stages with index at most $j$ affect the counting
	functions at $x$; in the terminal case there may be fewer such stages.
	Thus \eqref{eq:ternary-size} gives
	\[
	Q(x)\leq2j<\omega(x),\qquad
	C(x)\leq16j^3<\omega(x).
	\]
	If no such $j$ exists, both quantities are zero.  Since $Q$ is
	disjoint from $E$,
	\[
	A(x)=B(x/3)+Q(x)-C(x).
	\]
	This proves Theorem~\ref{thm:third} for infinite $B$.
	
	\section{Finite Sidon sets and proof of Theorem~\ref{thm:main}}
	
	For a finite core no deletion is needed.  The following elementary
	lemma is the positive-integer analogue of the remote insertion used by
	Chen and Fang~\cite{ChenFang2011}.
	
	\begin{lemma}[remote insertion]\label{lem:finite-insertion}
		Let $f:\N\to\N$ and suppose that $S_f$ contains arbitrarily long
		intervals.  Let $D\subseteq\N$ be finite and satisfy
		$d_D(n)\leq f(n)$ for every $n\in\N$.  If $d_D(u)<f(u)$ and
		$L\in\N$, then there is a finite set $D_1\supseteq D$ such that
		$D_1\setminus D$ consists of two elements larger than $L$,
		$d_{D_1}(u)=d_D(u)+1$, and $d_{D_1}(n)\leq f(n)$ for every $n\in\N$.
	\end{lemma}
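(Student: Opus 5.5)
The plan is to insert the pair $X=\{z,z+u\}$, with $z$ far beyond $D$ and chosen so that every cross-difference between $X$ and $D$ lies in a long interval of $S_f$. Since $D$ is finite, no deletions are needed. We only have to check that the new representations never push a count above its target.

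Put $M>\max(D\cup\{L,u,1\})$. Since $S_f$ contains arbitrarily long intervals, we may choose $a$ with $[a,a+4M]\subseteq S_f$, and we may take $a>3M$ by choosing a long enough interval and discarding an initial part. Set $z=a+2M$ and $D_1=D\cup\{z,z+u\}$. Then $z>5M$, so both new elements exceed $L$ and are not in $D$. Every cross-difference $|w-d|$ with $w\in X$ and $d\in D$ lies in $[z-M,z+u]\subseteq[z-M,z+M]\subseteq[a,a+4M]\subseteq S_f$.

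The verification splits into three ranges of $n$.

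\emph{Case $n\leq M$.} Cross-differences between $X$ and $D$ exceed $z-M>4M$, so they cannot represent $n$. The only new representation in this range is the internal pair $(z+u,z)$, which represents $u$. Hence $d_{D_1}(n)=d_D(n)\leq f(n)$ for $n\neq u$, and $d_{D_1}(u)=d_D(u)+1\leq f(u)$ by the deficiency hypothesis.

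\emph{Case $M<n\leq 4M$.} Differences inside $D$ and inside $X$ are smaller than $M$, and cross-differences exceed $4M$. So no representation of $n$ exists at all.

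\emph{Case $n>4M$.} Every representation joins an element $w\in X$ to an element $d\in D$ with $d=w-n$. Each $w\in X$ therefore contributes at most one representation, so $d_{D_1}(n)\leq 2$. Since such an $n$ is a cross-difference, it lies in $S_f$, giving $2\leq f(n)$.

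The only delicate point is confirming that no difference receives more than two new cross representations; this holds because each of the two elements of $X$ determines at most one partner in $D$. This is also exactly why the hypothesis $f\geq2$ on the safe interval suffices. With that, $d_{D_1}(n)\leq f(n)$ for all $n$, $D_1\setminus D$ consists of two elements larger than $L$, and $d_{D_1}(u)=d_D(u)+1$, as the statement of Lemma~\ref{lem:finite-insertion} requires.
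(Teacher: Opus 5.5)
Your proof is correct and follows essentially the same route as the paper: insert $\{z,z+u\}$ with $z=a+2M$ in the middle of a long interval of $S_f$, so that every cross-difference lies in $S_f$, exceeds every old difference and $u$, and has at most one representation through each of $z$ and $z+u$. Your extra requirement $a>3M$ is harmless but unnecessary, since $a\geq1$ already places every cross-difference above $a+M>M$.
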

	
	\begin{proof}
		Choose an integer $M>\max(D\cup\{L,u,1\})$.  There is an interval
		$[a,a+5M]\subseteq S_f$.  Put $z=a+2M$ and
		$D_1=D\cup\{z,z+u\}$.  For every $y\in D$,
		\[
		a+M<z-y\leq z+u-y<a+3M.
		\tag{5.1}\label{eq:finite-cross}
		\]
		Thus every cross-difference belongs to $S_f$ and is larger than every
		old difference and larger than $u$.  For a fixed difference there is
		at most one representation using $z$ and at most one using $z+u$.
		Consequently every cross-difference has at most two representations,
		whereas the pair $\{z,z+u\}$ increases the representation number of
		$u$ by exactly one.  All assertions follow.
	\end{proof}
	
	\begin{lemma}\label{lem:finite-core}
		Let $B$ be a finite Sidon set, let $m\in\{2,3\}$, and suppose that
		$S_f$ contains arbitrarily long intervals.  For every positive
		function $\omega(x)\to\infty$, there exists $A\subseteq\N$ such that
		\[
		d_A(n)=f(n)\quad(n\in\N),\qquad
		\bigl|A(x)-B(x/m)\bigr|\leq\omega(x)\quad(x\geq1).
		\]
	\end{lemma}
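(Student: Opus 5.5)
Start from the finite core $D_0=mB=\{mb:b\in B\}$, which is a finite Sidon set. Then $d_{D_0}(n)\leq1\leq f(n)$ for every $n$, and $D_0(x)=B(x/m)$ for all $x\geq1$. No deletions will be made. Instead, Lemma~\ref{lem:finite-insertion} is applied repeatedly, with thresholds chosen so that the finitely many inserted elements below any $x$ stay under $\omega(x)$. Since $\omega(x)\to\infty$, fix a strictly increasing sequence $L_1<L_2<\cdots$ with $\omega(x)>2h$ for $x>L_h$. Also take $L_1\geq\max D_0$, so that all insertions lie outside the core.

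Stage $h$ runs as follows. If $d_{D_{h-1}}=f$ everywhere, stop. Otherwise let $u_h$ be the least $n$ with $d_{D_{h-1}}(n)<f(n)$, and apply Lemma~\ref{lem:finite-insertion} with $D=D_{h-1}$, $u=u_h$, $L=L_h$. This gives a finite $D_h\supseteq D_{h-1}$ with two new elements larger than $L_h$, $d_{D_h}\leq f$, and $d_{D_h}(u_h)=d_{D_{h-1}}(u_h)+1$.

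The sets only grow, so representation numbers never decrease. Hence every $n<u_h$, which already satisfies $d_{D_{h-1}}(n)=f(n)$, keeps exactly $f(n)$ representations: at least that many by monotonicity, at most by the upper bound. It follows that the $u_h$ are nondecreasing. Each value can be repeated at most $f(u)$ times, because each repetition raises $d(u)$ by one. So either the process terminates, or $u_h\to\infty$.

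Put $A=\bigcup_h D_h$. For the lower bound, given $n$ choose $h$ with $u_h>n$; then $d_A(n)\geq d_{D_h}(n)=f(n)$. For the upper bound, any $f(n)+1$ representations of $n$ in $A$ use finitely many elements, so they all lie in a single $D_h$, contradicting $d_{D_h}\leq f$. In the terminating case the final $D_H$ works directly.

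For the density estimate, fix $x$ and let $j=\max\{h:L_h<x\}$. If no such $h$ exists, then $A\cap[1,x]=D_0\cap[1,x]$. Otherwise only stages $h\leq j$ contribute elements $\leq x$. Since $L_1\geq\max D_0$, the inserted elements are disjoint from $D_0$. Therefore $0\leq A(x)-B(x/m)\leq2j<\omega(x)$.

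This argument has no real obstacle. Deletions are unnecessary, and the whole burden is carried by Lemma~\ref{lem:finite-insertion}, whose cross-differences fall in a safe interval of $S_f$. The only points needing care are placing the insertions above the core and making sure the schedule handles all $x\geq1$, not just large $x$. Both are settled by the choice of $L_h$, because $\omega$ is positive and the count of inserted elements below $x$ is zero for $x\leq L_1$.
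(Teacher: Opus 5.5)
Your proposal is correct and follows the paper's proof step for step. Like the paper, you start from $D_0=mB$, apply Lemma~\ref{lem:finite-insertion} at the least deficient difference with thresholds $L_h$ chosen so that $\omega(x)>2h$ for $x>L_h$, take $A=\bigcup_h D_h$, and bound $0\leq A(x)-B(x/m)\leq 2j<\omega(x)$. The extra requirement $L_1\geq\max D_0$ is harmless but unnecessary, because the two elements of $D_h\setminus D_{h-1}$ are by definition not in $D_{h-1}\supseteq D_0$.
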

	
	\begin{proof}
		Put $E=mB$ and choose $L_1<L_2<\cdots$ such that
		$\omega(x)>2h$ whenever $x>L_h$.  Starting with $D_0=E$, let $u_h$ be
		the least deficient difference of $D_{h-1}$ and apply
		Lemma~\ref{lem:finite-insertion} with $L=L_h$.  This gives an increasing
		sequence of finite sets satisfying
		\[
		d_{D_h}(n)\leq f(n),\qquad
		d_{D_h}(u_h)=d_{D_{h-1}}(u_h)+1,\qquad
		\card{D_h\setminus E}\leq2h.
		\tag{5.2}\label{eq:finite-stage}
		\]
		No difference smaller than $u_h$ changes, so the targets are
		nondecreasing and eventually pass every positive integer.  For
		$A=\bigcup_{h\geq0}D_h$, the usual finite-endpoint argument gives
		$d_A(n)=f(n)$ for every $n$.
		
		If $j$ is the largest index with $L_j<x$, then stages after $j$ add no
		element below $x$, and
		\[
		0\leq A(x)-E(x)\leq2j<\omega(x).
		\]
		The same difference is zero when no such $j$ exists.  Since
		$E=mB\subseteq A$, we have $A(x)\geq E(x)=B(x/m)$ for every $x$;
		the preceding estimate gives
		$A(x)\leq B(x/m)+\omega(x)$.  This proves the stated two-sided bound.
	\end{proof}
	
	Lemma~\ref{lem:finite-core} with $m=3$ completes the proof of
	Theorem~\ref{thm:third} when $B$ is finite.
	
	\begin{proof}[Proof of Theorem~\ref{thm:main}]
		Choose $N_0$ such that $f(n)\geq2$ for $n>N_0$.  If $B$ is infinite,
		then for every $L$ all sufficiently large $b\in B$ satisfy
		\[
		[2b-L,2b+L]\cap\N\subseteq S_f,
		\]
		so Theorem~\ref{thm:half} applies.  If $B$ is finite, then $S_f$
		contains arbitrarily long intervals, and Lemma~\ref{lem:finite-core}
		with $m=2$ gives the required set.
	\end{proof}


\begin{thebibliography}{99}
		
		\bibitem{AjtaiKomlosSzemeredi}
		M. Ajtai, J. Koml\'os and E. Szemer\'edi,
		A dense infinite Sidon sequence,
		\emph{European J. Combin.} \textbf{2} (1981), 1--11.
		
		\bibitem{ChenFang2011}
		Y.-G. Chen and J.-H. Fang,
		On a problem of Cilleruelo and Nathanson,
		\emph{Combinatorica} \textbf{31} (2011), 691--696.
		
		\bibitem{ChenFang2027}
		Y.-G. Chen and J.-H. Fang,
		Dense perfect difference sets constructed from Sidon sets,
		\emph{J. Combin. Theory Ser. A} \textbf{225} (2027), Article 106239.
		
		\bibitem{CillerueloNathanson2008}
		J. Cilleruelo and M. B. Nathanson,
		Perfect difference sets constructed from Sidon sets,
		\emph{Combinatorica} \textbf{28} (2008), 401--414.
		
		\bibitem{CillerueloNathanson2013}
		J. Cilleruelo and M. B. Nathanson,
		Dense sets of integers with prescribed representation functions,
		\emph{European J. Combin.} \textbf{34} (2013), 1297--1306.
		
		\bibitem{Fang2020}
		J.-H. Fang,
		On generalized perfect difference sumsets,
		\emph{SIAM J. Discrete Math.} \textbf{34} (2020), 2448--2456.
		
		\bibitem{Fang2021}
		J.-H. Fang,
		On generalized perfect difference sets constructed from Sidon sets,
		\emph{Discrete Math.} \textbf{344} (2021), Article 112589.
		
		\bibitem{Kruckeberg}
		F. Kr\"uckeberg,
		$B_2$-Folgen und verwandte Zahlenfolgen,
		\emph{J. Reine Angew. Math.} \textbf{206} (1961), 53--60.
		
		\bibitem{Lev}
		V. F. Lev,
		Reconstructing integer sets from their representation functions,
		\emph{Electron. J. Combin.} \textbf{11} (2004), Research Paper 78.
		
		\bibitem{Nathanson2003}
		M. B. Nathanson,
		Unique representation bases for the integers,
		\emph{Acta Arith.} \textbf{108} (2003), 1--8.
		
		\bibitem{Nathanson2005}
		M. B. Nathanson,
		Every function is the representation function of an additive basis for
		the integers,
		\emph{Port. Math. (N.S.)} \textbf{62} (2005), 55--72.
		
		\bibitem{Pollington}
		A. D. Pollington,
		On the density of $B_2$-bases,
		\emph{Discrete Math.} \textbf{58} (1986), 209--211.
		
		\bibitem{PollingtonVandenEynden}
		A. D. Pollington and C. Vanden Eynden,
		The integers as differences of a sequence,
		\emph{Canad. Math. Bull.} \textbf{24} (1981), 497--499.
		
		\bibitem{Ruzsa1993}
		I. Z. Ruzsa,
		Solving a linear equation in a set of integers I,
		\emph{Acta Arith.} \textbf{65} (1993), 259--282.
		
		\bibitem{Ruzsa1998}
		I. Z. Ruzsa,
		An infinite Sidon sequence,
		\emph{J. Number Theory} \textbf{68} (1998), 63--71.
		
		\bibitem{Stohr}
		A. St\"ohr,
		Gel\"oste und ungel\"oste Fragen \"uber Basen der nat\"urlichen
		Zahlenreihe I, II,
		\emph{J. Reine Angew. Math.} \textbf{194} (1955), 40--65, 111--140.
		
	\end{thebibliography}
\end{document}